\documentclass[11pt]{amsart}

\usepackage[latin1]{inputenc}
\usepackage{graphicx}

\newtheorem{teo}{Theorem}[section]

\newtheorem{lema}[teo]{Lemma}

\theoremstyle{remark}
\newtheorem{rem}{Remark}[section]

\def\ve{\varepsilon}
\def\R{{\mathbb{R}}}
\def\N{{\mathbb{N}}}
\def\s{\sigma}

\def\E{\mathbb{E}}
\def\P{\mathbb{P}}

\def\F{\mathcal F}

\def\one{\mathbf 1}

\parskip 5pt

\begin{document}
\title[Reaction diffusion equations with additive noise]
{Time--space white noise eliminates global solutions in reaction
diffusion equations}

\author{Julian Fern\'{a}ndez Bonder \and Pablo Groisman }
\thanks{Supported by Universidad de Buenos Aires under grants X447 and X078,
by ANPCyT PICT 2006--290 and 2006--1309 and CONICET PIP 5478/1438. Both authors
are members of CONICET.
\newline
\indent 2000 {\it Mathematics Subject Classification:} 60H15, 35R60,
35B60.}

\keywords{Explosion, Stochastic Partial Differential Equations,
Reaction--Diffusion Equations.}

\address{J. Fern\'{a}ndez Bonder and P. Groisman \hfill \break \indent
Departamento  de Matem\'atica, FCEyN, Universidad de Buenos Aires,
\hfill\break\indent Pabell\'{o}n I, Ciudad Universitaria (1428),
Buenos Aires, Argentina.}

\email{{\tt jfbonder@dm.uba.ar, pgroisma@dm.uba.ar}\hfill \break
\indent {\it URL:} {\tt http://mate.dm.uba.ar/$\sim$jfbonder,
http://mate.dm.uba.ar/$\sim$pgroisma}}

\begin{abstract}
We prove that perturbing the reaction--diffusion equation
$u_t=u_{xx} + (u_+)^p$ ($p>1$), with time--space white noise
produces that solutions explodes with probability one for every
initial datum, opposite to the deterministic model where a positive
stationary solution exists.
\end{abstract}

\maketitle

\section{Introduction}
\setcounter{equation}{0}

In this paper we study the following parabolic SPDE with additive
noise
\begin{equation}\label{SPDE}
u_t = u_{xx} + f(u) + \s \dot{W}(x,t),
\end{equation}
in an interval $(0,1)$, complemented with homogeneous Dirichlet
boundary conditions. Here $W$ is a $2-$dimensional Brownian sheet,
$\s$ is a positive parameter and $f$ is a locally Lipschitz real
function.

We restrict ourselves to one space dimension since for higher
dimensions the solution to \eqref{SPDE} (if it exists) it is not
expected to be a function valued process and have to be understood
in a distributional sense. But in this case there is no natural way
to define $f(u)$, see \cite{Pardoux} for more on this.

Semilinear parabolic equations like \eqref{SPDE} arises in the phenomenological
approach to such different phenomena as the diffusion of a fluid in a porous
medium, transport in a semiconductor, chemical reactions with possibility of
spatial diffusion, population dynamics, chemotaxis in biological systems, etc.
In all these cases, due to the phenomenological approximate character of the
equations, it is of interest to test how the description changes under the
effect of stochastic perturbation.

Equation \eqref{SPDE} with $f$ globally Lipschitz has been widely studied (see
\cite{Pardoux, Walsh}), in this case global solutions exist with probability
one. However, when $f$ is just locally Lipschitz, typically $f(s) \sim s^p$
with $p>1$ or $f(s) \sim {\rm e}^s$, there are practically no results on this
problem. Using standard approximation arguments one can easily prove the
existence of local in time solutions but it does not follow from that proof the
behavior of the maximal time of existence.

On the other hand, the deterministic case (i.e. $\s=0$) is very well
understood. One problem that has drawn the attention to the PDE community is
the appearance of singularities in finite time, no matter how smooth the
initial data is. This phenomena is known as {\em blow-up}. What happens is that
solutions go to infinity in finite time, that is, there exists a time
$T<\infty$ such that
$$
\lim_{t\nearrow T} \|u(\cdot,t)\|_{\infty} = \infty.
$$
A well known condition on the nonlinear term $f$ that assures this phenomena is
when $f$ is a nonnegative convex function with
$$
\int^\infty \frac{1}{f} < \infty.
$$
For a general reference of these facts and much more on blow-up problems, see
the book \cite{SGKM} and the surveys \cite{BB, JLV}.

For a large class of nonlinearities $f$, such as the ones mentioned above,
problem \eqref{SPDE} with $\s=0$ admits a stationary positive solution $v$ and
hence, since the comparison principle holds for this equation, for every
initial datum $u_0 \le v$ the solution to \eqref{SPDE} is global in time.

It is well known (see \cite{JLV,SGKM}) that the appearance of blow-up persists
under (small) regular perturbations. On the other hand, regular perturbations
of \eqref{SPDE} with $\s=0$ admit global in time solutions. Summarizing, the
existence of global in time/blowing up solutions for this problem with $\s=0$
is stable under small regular perturbations. Hence it is of interest to test
how this phenomena is affected by stochastic perturbations.

Surprisingly, the situation changes for $\s >0$. We prove that, in this case,
there is no global in time solution. In fact, for every initial nonnegative
datum $u_0$, the solution to \eqref{SPDE} blows up with probability one.

Stochastic partial differential equations with blow-up has been considered by
C. Mueller in \cite{M1, M2} and C. Mueller and R. Sowers in \cite{MS}. In those
papers, a linear drift with a nonlinear multiplicative noise is considered and
the explosion is due to this latter term.

A similar result, but in some sense in the opposite direction, was proved by
Mao, Marion and Renshaw in \cite{MMR}. There, the authors prove for a system of
ODEs that arise in population dynamics and that have blow-up solutions, that
perturbing some coefficients of the system with a small Brownian noise, global
solutions a.s. are obtained for every initial data.

In our problem, a common way to interpret the asymptotic behavior of $u$ is the
following: consider first the deterministic case $\s=0$. In this case there is
some kind of competition between the diffusion, which diffuses the zero
boundary condition to the interior of the domain and the nonlinear source
$f(u)$ that induces $u$ to grow very fast.

Again in the deterministic case, it was proved in \cite{CE} that for small
initial datum $u_0$, $u \to 0$ as $t \to +\infty$, while for $u_0$ large, there
exists a finite time $T$, such that $\|u(\cdot, t)\|_\infty \nearrow +\infty$
as $t \nearrow T$. More precisely, it is proved that for every data $u_0$,
there exists a critical parameter $\lambda^*$ such that if we solve the PDE
with initial data $\lambda u_0$, for $\lambda<\lambda^*$ the solution converges
to $0$ uniformly, for $\lambda>\lambda^*$ the solution blows-up in finite time
and for $\lambda=\lambda^*$ the solution converges uniformly to the unique
positive steady state.

For small noise $\s \ll 1$ one could expect a similar behavior. Of course we
can not expect convergence to the zero solution as $t \to \infty$ since in this
case $v\equiv 0$ is not invariant for \eqref{SPDE}, but it is reasonable to
suspect the existence of an invariant measure close to the zero solution of the
deterministic PDE and convergence to this invariant measure for small initial
datum as $t \to \infty$.

However, that is not the case. We prove in Section \ref{explosions} that for
every initial datum $u_0$ solutions to \eqref{SPDE} blow-up in finite time with
probability one.

Numerical simulations, as well as heuristical arguments, suggest that, for
small initial data $u_0$, metastability could be taking place in this case.
Metastability appears here since, while the noise remains relatively small, the
solution stays in the domain of attraction of the zero solution of the
deterministic problem. But, as soon as the noise becomes large, the solution
escapes this domain of attraction and hence the reaction term begins to
dominate and pushes forward the solution until ultimately explosion cannot be
prevented by the action of the noise.

\subsection*{Organization of the paper}
The paper is organized as follows. In Section \ref{rigourous} we give the
rigorous meaning of \eqref{SPDE} and give the references where the foundations
for the study of this kind of equation were laid. Section \ref{explosions}
deals with the proof of the main result of this paper: the explosion of the
solutions of \eqref{SPDE}. In Section \ref{approximations} we propose a
semidiscrete scheme in order to approximate the solutions to \eqref{SPDE}. We prove that the numerical approximations also explode with probability one and that they converge a.s., in time intervals where the
continuous solution remains bounded. Finally, in Section \ref{experiments} we
show some numerical simulations for this equation.

\section{Formulation of the problem}
\label{rigourous}
\setcounter{equation}{0}

We begin this section discussing the rigorous meaning of \eqref{SPDE}, the
references for this being \cite{BP,GP, Pardoux, Walsh}. There are two
alternatives: the {\em integral} and the {\em weak} formulation as described in
\cite{BP, Pardoux, Walsh}. The last being more suitable for our purposes. Both
formulations are equivalent as is shown in \cite{Walsh}.

Let $(\Omega, \F, (\F_t)_{t\ge 0}, \P)$ be a probability space equipped with a
filtration $(\F_t)_{t\ge 0}$ which is supposed to be right continuous and such
that $\F_0$ contains all the $\P-$null sets of $\F$. We are given a space-time
white noise on $\R_+ \times [0,1]$ defined on $(\Omega, \F, (\F_t)_{t\ge 0},
\P)$ and $u_0 \in C_0([0,1])$.

Assume for a moment that $f$ is globally Lipschitz, multiply \eqref{SPDE} by a
test function $\varphi \in C^2((0,1))\cap C_0([0,1])$ and integrate to obtain
\begin{equation}
\label{debil}
\begin{aligned}
\int_0^1 &u(x,t) \varphi(x) \, dx  - \int_0^1 u_0(x)\varphi(x) \, dx =\\
&\int_0^t\int_0^1 u(s,x) \varphi_{xx}(x)\, dx \, ds
 + \int_0^t\int_0^1 f(u(s,x)) \varphi(x)\, dx \, ds\\
 & + \s \int_0^t\int_0^1 \varphi(x)\, dW(x,s).
\end{aligned}
\end{equation}

Alternatively, the integral formulation of the problem is constructed by means
of the function $G$, the fundamental solution of the heat equation for the
domain $(0,1)$.
\begin{align*}
 u(x,t) - &\int_0^1 G_t(x,y) u_0(y) \, dy =\\
&\int_0^t \int_0^1 G_{t-s}(x,y) f(u(y,s)) \, dy ds + \s \int_0^t
\int_0^1 G_{t-s}(x,y) dW(y,s).
\end{align*}

As a solution to \eqref{SPDE} we understand an $\F_t-$adapted
process with values in $C_0([0,1])$ that verifies \eqref{debil} for
every $\varphi \in C^\infty((0,1)) \cap C_0([0,1])$.

In \cite{BP, Walsh} it is proved that there exists a unique solution
to this problem and that the integral and weak formulations are
equivalent.

For $f$ locally Lipschitz globally defined solutions do not exist in
general. Nevertheless, existence of local in time solutions is
proved by standard arguments: consider for each $n \in \N$ the
globally Lipschitz function $f_n(x)=f(-n)\one_{(-\infty,-n]} +
f(x)\one_{(-n,n)} + f(n)\one_{[n,+\infty)}$ and $u^n$, the unique
solution of \eqref{SPDE} with $f$ replaced by $f_n$. Let $T_n$ be
the first time at which $\|u^n(\cdot,t)\|_{\infty}$ reaches the
value $n$. Then $(T_n)_n$ is an increasing sequence of stopping
times and we define the maximal existence time of \eqref{SPDE} as
$T:=\lim T_n$. It is easy to see that $u^{n+1} \one_{\{t<T_n\}} =
u^n \one_{\{t<T_n\}}$ a.s. and hence there exist the limit
$u(x,t)=\lim u^n(x,t)$ for $t<T$ which verifies
\begin{equation}
\begin{aligned}
\int_0^1 &u(x,t\wedge T) \varphi(x) \, dx  - \int_0^1 u_0(x)\varphi(x) \, dx =\\
&\int_0^{t\wedge T}\int_0^1 u(s,x) \varphi_{xx}(x)\, dx \, ds
+ \int_0^{t\wedge T}\int_0^1 f(u(s,x)) \varphi(x)\, dx \, ds\\
&  + \s \int_0^{t\wedge T}\int_0^1 \varphi(x)\, dW(x,s).
\end{aligned}
\end{equation}

So we say that $u$ solves \eqref{SPDE} up to the explosion time $T$.
We also say that $u$ blows up in finite time if $\P(T<\infty)
> 0$. Observe that if $T(\omega)<\infty$ then
$$
\lim_{t\nearrow T(\omega)} \|u(\cdot, t, \omega)\|_\infty = \infty.
$$

\section{Explosions}
\label{explosions}
\setcounter{equation}{0}

 In this section, we show that equation
\eqref{SPDE} blows-up in finite time with probability one for every
initial datum $u_0 \in C_0([0,1])$. Hereafter we assume that $f$ is a nonnegative convex function, hence locally Lipschitz. Moreover we assume that $\int^\infty 1/f < \infty$.

In order to prove the blow-up of $u$, we define the function
$$
\Phi(t):=\int_0^1 \phi(x) u(x,t)\, dx.
$$
Here $\phi(x)>0$ is the normalized first eigenfunction of the
Dirichlet Laplacian in $(0,1)$. That is,
$\phi(x)=\tfrac{\pi}{2}\sin(\pi x)$ and hence we can use it as a
test function in \eqref{debil} to obtain

\begin{align*}
\Phi(t) -  \Phi(0) =& -\lambda_1 \int_0^t\Phi(s)\, ds + \int_0^t\int_0^1 \phi(x)
f(u(x,s))\, dx ds \\
&+ \s \int_0^t\int_0^1 \phi(x)\, dW(x,s).
\end{align*}

We denote by $z_0:=\Phi(0)=\int_0^1 \phi(x) u_0(x)\, dx$.

Now, as $f$ is convex, by Jensen's inequality, we get
$$
\int_0^1 \phi(x) f(u(x,s))\, dx \ge f\Big(\int_0^1 \phi(x) u(x,s)\,
dx\Big) = f(\Phi(s)).
$$
Moreover, since $\phi$ is a positive function with $L^1-$norm equal
to 1, it is easy to see that
$$
B(t) := \frac{\sqrt{8}}{\pi}\int_0^t\int_0^1 \phi(x)\, dW(x,s),
$$
is a standard Brownian motion.

Combining all these facts, we obtain that $\Phi$ verifies the (one
dimensional) stochastic differential inequality
$$
d\Phi(t) \ge \big(-\lambda_1  \Phi(t) + f(\Phi(t))\big) \, dt +
\frac{\pi}{\sqrt{8}}\s dB(t).
$$

Define $z(t)$ to be the one-dimensional process that verifies
$$
dz = (-\lambda_1 z + f(z))\, dt + \s dB,
$$
with initial condition $z(0)=z_0$. Then, $e(t) = \Phi(t) - z(t)$
verifies
$$
de \ge \Big(-\lambda_1 e + \frac{f(\Phi)-f(z)}{\Phi-z} e\Big)\, dt.
$$
Observe that $e$ verifies a deterministic differential inequality.
Hence, as $e(0)=0$ it is easy to check that $e(t) \ge 0$ as long as it
is defined.

Therefore, $\Phi(t)\ge z(t)$ as long as $\Phi$ is defined.

The following lemma proves that $z$ explodes with probability one.

\begin{lema}
\label{feller.test}
Let $z$ be the solution of
\begin{equation}
\label{1.d} dz = (-\lambda_1 z + f(z))\, dt + \s dB, \qquad z(0)=0.
\end{equation}
Then $z$ explodes in finite time with probability one.
\end{lema}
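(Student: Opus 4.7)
My plan is to apply Feller's test for explosions to the one-dimensional SDE \eqref{1.d}. With drift $\mu(z)=-\lambda_1 z+f(z)$ and constant diffusion coefficient $\sigma$, I introduce the scale density and Feller test function
$$p(y):=\exp\Bigl(-\tfrac{2}{\sigma^2}\int_0^y\mu(u)\,du\Bigr)=e^{-g(y)},\qquad g(y):=\tfrac{2}{\sigma^2}\Bigl(F(y)-\tfrac{\lambda_1 y^2}{2}\Bigr),\quad F(y):=\int_0^y f,$$
$$v(z):=\int_0^z p(y)\int_0^y \frac{2}{\sigma^2 p(u)}\,du\,dy.$$
By Feller's criterion $\P(T<\infty)=1$ as soon as at least one of $v(\pm\infty)$ is finite, so it is enough to prove $v(+\infty)<\infty$. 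As a bonus one may check $v(-\infty)=+\infty$ (using $p(y)\ge e^{\lambda_1 y^2/\sigma^2}$ for $y\le 0$), which additionally pins the explosion at $+\infty$ rather than $-\infty$.

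The crux is thus $v(+\infty)<\infty$. I first observe that convexity of $f$ combined with $\int^\infty 1/f<\infty$ forces $f(y)/y\to+\infty$: if instead $f(y_n)\le C y_n$ along some $y_n\to\infty$, then convexity and eventual monotonicity give $f(y)\le 2Cy$ throughout $[y_n/2,y_n]$, whence $\int^\infty 1/f\ge \sum_n \log(2)/(2C)=\infty$, a contradiction. Consequently $g'(y)=\tfrac{2}{\sigma^2}(f(y)-\lambda_1 y)\to+\infty$ and is eventually increasing, so an integration by parts yields for large $y$
$$\int_0^y e^{g(u)}\,du=\frac{e^{g(y)}}{g'(y)}+O(1)+\int_0^y\frac{g''(u)}{g'(u)^2}e^{g(u)}\,du,$$
with the last integral of strictly smaller order than the main term (its integrand carries the extra factor $g''/(g')^2=o(1)$). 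Multiplying by $p(y)=e^{-g(y)}$,
$$p(y)\int_0^y\frac{2\,e^{g(u)}}{\sigma^2}\,du\sim\frac{1}{f(y)-\lambda_1 y}\sim\frac{1}{f(y)}\qquad\text{as }y\to+\infty,$$
so $v(+\infty)\le C+C'\int^\infty dy/f(y)<\infty$ by hypothesis, and Feller's test concludes $\P(T<\infty)=1$.

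The main obstacle will be a rigorous justification of the integration-by-parts estimate, specifically the absorption of the correction $\int g''/(g')^2 e^g$ into the principal term. This requires quantitative control on $g''/(g')^2$, equivalently on $f'/f^2$, which follows from superlinear growth but needs some care: since $f$ is only convex (hence a.e.\ differentiable) one may have to integrate by parts against the distributional derivative or replace $f$ by a smooth convex minorant of comparable growth before running the asymptotic estimate. Everything else (the contradiction argument for $f(y)/y\to\infty$, the estimate on $v(-\infty)$, and the invocation of Feller's test) is essentially formal.
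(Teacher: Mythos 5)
Your proposal follows exactly the paper's route: Feller's test for explosions applied to \eqref{1.d}, with the scale density $p=e^{-g}$ and the test function $v$, and the observation that the decay of $p(y)\int_0^y p(u)^{-1}\,du$ at $+\infty$ is governed by $1/f$; indeed you supply more detail than the paper does at the key asymptotic step. Two points need repair, though. First, your statement of Feller's criterion is too strong: finiteness of one of $v(\pm\infty)$ only yields $\P(T<\infty)>0$ (Karatzas--Shreve, Thm.\ 5.5.29), so it is \emph{not} ``enough to prove $v(+\infty)<\infty$''; to get probability one you need the combination $v(+\infty)<\infty$ and $v(-\infty)=\infty$ (Prop.\ 5.5.32), so the check you relegate to a ``bonus'' is an essential part of the argument --- this is also how the paper argues, first locating the exit at $+\infty$ via $p(-\infty)=-\infty$, $p(+\infty)<\infty$ and then invoking $v(+\infty)<\infty$. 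Second, the assertion that $g''/(g')^2\sim f'/f^2=o(1)$ does \emph{not} follow from convexity and $\int^\infty 1/f<\infty$: a piecewise linear convex $f$ with $f(n+1)=f(n)^3$ satisfies the hypotheses yet has $f'/f^2$ unbounded, and at such points the asymptotic $e^{-g(y)}\int_0^y e^{g}\sim 1/g'(y)$ genuinely fails. The conclusion survives because you only need an integrable upper bound, not an asymptotic equivalence: splitting
$$
e^{-g(y)}\int_0^y e^{g(u)}\,du\le \tfrac{y}{2}\,e^{g(y/2)-g(y)}+\frac{1}{g'(y/2)},
$$
(the second piece bounded by monotonicity of $g'$), both terms are integrable in $y$ since $g(y)-g(y/2)\ge \tfrac{y}{2}g'(y/2)\to\infty$ and $\int^\infty dy/g'(y/2)\le C\int^\infty dy/f(y)<\infty$. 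With these two corrections your argument is complete and coincides with the paper's proof.
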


\begin{proof}
The proof is just an application of the {\em Feller Test for
explosions} (\cite{KS}, Chapter 5). Using the same notation as in
\cite{KS} we obtain the scale function for \eqref{1.d} to be
$$
p(x) = \int_0^x \exp\left(-\frac{2}{\s^2}\int_0^s b(\xi) \,
d\xi\right) \, ds
$$
Here $b(\xi)= -\lambda_1 \xi + f(\xi)$.

It is easy to see that, as $\int^\infty 1/f <\infty$,
$$
p(-\infty)= - \infty, \qquad p(+\infty) < +\infty,
$$
and hence the Feller Test implies that, if $S$ is the explosion time
of $z$, we get
$$
\P \left (\lim_{t \nearrow S} z(t) = + \infty \right ) = 1
$$
To prove that $\P(S < +\infty ) = 1$ we have to consider the
function
$$
v(x) =  2 \int_0^x \frac{p(x) - p(y)}{\s^2 p(y) } \, dy.
$$
The behavior of $v$ at $+\infty$ is given by $1/f$ and hence
$v(+\infty)<+\infty$, which implies that
$$
\P(S < \infty ) = 1.
$$
This completes the proof.
\end{proof}

These facts all together, imply that there exists a (random) time
$T=T(\omega)<\infty$ a.s. such that
$$
\lim_{t\nearrow T} \|u(\cdot,t)\|_{\infty} = \infty \quad
\text{a.s.}
$$

So we have proved the following Theorem.

\begin{teo}\label{teo.explota.continuo}
Let $f$ be a nonnegative, convex function such
that
$$
\int^\infty \frac{1}{f}\, <\infty.
$$
Then, for every nonnegative initial datum $u_0 \ge 0$ the solution $u$ to
\eqref{SPDE} blows-up in finite (random) time $T$ with
$$
\P^{u_0}(T<\infty) = 1.
$$
\end{teo}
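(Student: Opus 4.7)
The plan is to reduce the infinite-dimensional explosion question to a one--dimensional SDE to which Lemma \ref{feller.test} applies. Concretely, I would project the solution onto the first Dirichlet eigenfunction: set $\phi(x)=\tfrac{\pi}{2}\sin(\pi x)$, the normalized principal eigenfunction of $-\partial_{xx}$ on $(0,1)$ with eigenvalue $\lambda_1=\pi^2$, and define
\[
\Phi(t):=\int_0^1 \phi(x)\,u(x,t)\,dx.
\]
Since $\phi\in C^2((0,1))\cap C_0([0,1])$, it is an admissible test function in the weak formulation \eqref{debil} (up to the maximal existence time $T$). Using $\varphi=\phi$ and $\phi_{xx}=-\lambda_1\phi$ yields
\[
\Phi(t)-\Phi(0)=-\lambda_1\int_0^t\Phi(s)\,ds+\int_0^t\!\!\int_0^1\phi(x)f(u(x,s))\,dx\,ds+\s\int_0^t\!\!\int_0^1\phi(x)\,dW(x,s).
\]

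Next I would handle the two nonlinear/stochastic terms. Because $\phi\ge 0$ and $\int_0^1\phi=1$, Jensen's inequality together with convexity of $f$ gives $\int_0^1\phi(x)f(u(x,s))\,dx\ge f(\Phi(s))$ (this uses $u_0\ge 0$ and nonnegativity of $f$, so that replacing $u$ by $u_+$ changes nothing and the comparison is clean). For the stochastic term, the Wiener integral $\int_0^t\int_0^1\phi(x)\,dW(x,s)$ is a centered Gaussian martingale with quadratic variation $t\|\phi\|_{L^2}^2=t\,\pi^2/8$, so $B(t):=\tfrac{\sqrt{8}}{\pi}\int_0^t\int_0^1\phi\,dW$ is a standard Brownian motion. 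Putting these together, $\Phi$ satisfies the stochastic differential inequality
\[
d\Phi(t)\ \ge\ \bigl(-\lambda_1\Phi(t)+f(\Phi(t))\bigr)dt+\tfrac{\pi}{\sqrt{8}}\s\,dB(t),\qquad \Phi(0)=z_0:=\int_0^1\phi u_0\ge 0.
\]

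Now I would invoke Lemma \ref{feller.test} (after the harmless rescaling of $\s$) applied to the 1-dimensional diffusion
\[
dz=(-\lambda_1 z+f(z))\,dt+\tfrac{\pi}{\sqrt{8}}\s\,dB,\qquad z(0)=z_0,
\]
which blows up in finite time with probability one under the hypothesis $\int^\infty 1/f<\infty$ (the lemma is stated for $z_0=0$, but the same Feller test computation gives the result for any $z_0\ge 0$; alternatively start from $z(0)=0\le \Phi(0)$ and use monotonicity). A pathwise comparison argument (subtracting the SDE for $z$ from the inequality for $\Phi$ makes the noise cancel and leaves a linear deterministic differential inequality for $e=\Phi-z$ with $e(0)\ge 0$, so $e\ge 0$ on the common interval of existence) then gives $\Phi(t)\ge z(t)$ almost surely for all $t$ less than the minimum of the two explosion times. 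Since $z$ explodes a.s., $\Phi$ must explode a.s.; but $\Phi(t)\le\|u(\cdot,t)\|_\infty$, so $\|u(\cdot,t)\|_\infty\to\infty$ in finite time a.s., which is exactly $\P^{u_0}(T<\infty)=1$.

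The main obstacle I anticipate is the pathwise comparison step: the inequality for $\Phi$ only holds in the weak formulation, and one must be careful to argue the comparison up to the explosion time and not merely up to an a priori finite interval. The clean way is to note that on $\{t<T\}$ both $\Phi$ and $z$ are well-defined continuous semimartingales driven by the \emph{same} Brownian motion $B$, so the difference $e=\Phi-z$ is a continuous process of bounded variation satisfying $de\ge[-\lambda_1+ L(t)]\,e\,dt$ where $L(t)$ is the locally bounded difference quotient $(f(\Phi)-f(z))/(\Phi-z)$; Gronwall then forces $e\ge 0$. Once this is in place, everything else is bookkeeping.
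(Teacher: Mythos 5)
Your proposal is correct and follows essentially the same route as the paper: projecting onto the first Dirichlet eigenfunction, applying Jensen's inequality, identifying the rescaled noise term as a standard Brownian motion, comparing pathwise with the one-dimensional SDE via the deterministic differential inequality for $e=\Phi-z$, and concluding with the Feller test of Lemma \ref{feller.test}. Your added care about the comparison holding up to the explosion time and about the initial condition $z_0\ge 0$ versus the lemma's $z(0)=0$ only makes explicit points the paper leaves implicit.
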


\section{Numerical approximations}
\label{approximations} \setcounter{equation}{0}

In this section we introduce a numerical scheme in order to compute solutions
to problem \eqref{SPDE}. We discretize the space variable with second order
finite differences in a uniform mesh of size $h=1/n$. That is, for $x:=i/n,
i=1, 2, \dots, n-1$ the process $u^n(t,i/n)=u_i(t)$ is defined as the solution
of the system of stochastic differential equations

\begin{equation}
\label{semidiscrete}
du_i = \frac{1}{h^2} (u_{i+1} -2u_i + u_{i-1}) dt + f(u_i) \, dt +
\frac{\s}{\sqrt{h}} \, dw_i, \quad 2 \le i \le n-1,
\end{equation}
accompanied with the boundary conditions $u_1(t)=u_n(t)=0$,
$u_i(0)=u_0(ih)$, $1 \le i \le n$. The Brownian motions $w_i$ are
obtained by space integration of the Brownian sheet in the interval
$[ih, (i+1)h)$.

Equivalently, this can be written as
$$
dU = (-AU + f(U))\, dt + \frac{\s}{\sqrt{h}} \, dW,
U(0)=U^0.
$$
Where $U(t)=(u_1(t), \dots,u_n(t))$, $-A$ is the discrete laplacian, $f(U)$ in
understood componentwise (i.e. $f(U)_i = f(u_i))$, $dW=(dw_1, \dots, dw_n)$ and
$(U^0)_i=u_0(ih)$.

With the same techniques of Theorem \ref{teo.explota.continuo} it can be proved
that solutions to this system of SDEs explodes in finite time with probability
one.

We extend $u^n(t,\cdot)$ to the whole interval $[0,1]$ by linear interpolation
in the space variable for each $t$.

Concerning the explosions of this system of SDEs we have the following

\begin{teo}\label{teo.explota.semidiscreto}
Let $f$ be a nonnegative, convex function such
that
$$
\int^\infty \frac{1}{f}\, <\infty.
$$
Then, for every nonnegative initial datum $U^0 \ge 0$ the solution $U$ to
\eqref{semidiscrete} blows-up in finite (random) time $T^n$ with
$$
\P^{U^0}(T^n<\infty) = 1.
$$
\end{teo}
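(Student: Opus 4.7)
The plan is to imitate the proof of Theorem \ref{teo.explota.continuo} verbatim, replacing the continuous first Dirichlet eigenfunction by the first eigenvector of the discrete Laplacian. Let $\phi^n=(\phi^n_1,\dots,\phi^n_{n-1})$ denote the positive eigenvector of $A$ associated to its smallest eigenvalue $\lambda_1^n$, normalized by $h\sum_i\phi^n_i=1$; recall $\lambda_1^n\to\pi^2=\lambda_1$ as $n\to\infty$, and $\phi^n_i$ is obtained by sampling $\tfrac{\pi}{2}\sin(\pi ih)$ up to a multiplicative constant tending to $1$.

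First I would define the test quantity
$$
\Phi^n(t):=h\sum_{i} \phi^n_i\, u_i(t),
$$
and use It\^o's formula (trivially, since it is linear) together with the symmetry of $A$ to get
\begin{equation*}
d\Phi^n(t)= -\lambda_1^n \Phi^n(t)\,dt + h\sum_i\phi^n_i f(u_i(t))\,dt + \sigma\sqrt{h}\sum_i \phi^n_i\,dw_i(t).
\end{equation*}
Since the weights $h\phi^n_i$ are nonnegative and sum to one, Jensen's inequality for the convex function $f$ yields
$$
h\sum_i\phi^n_i f(u_i(t))\ge f\!\left(h\sum_i\phi^n_i u_i(t)\right)=f(\Phi^n(t)).
$$
For the martingale part, the quadratic variation of the driving term is $\sigma^2 h\sum_i (\phi^n_i)^2\,dt=:\sigma^2 c_n^2\,dt$, so setting
$$
B^n(t):=\frac{1}{c_n}\sqrt{h}\sum_i \phi^n_i\, w_i(t)
$$
produces a standard Brownian motion; note $c_n^2\to\int_0^1\phi(x)^2\,dx=\pi^2/8$, so in particular $c_n$ stays bounded away from $0$.

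With these two ingredients I obtain the one-dimensional stochastic differential inequality
$$
d\Phi^n(t)\ge \bigl(-\lambda_1^n\Phi^n(t)+f(\Phi^n(t))\bigr)\,dt + \sigma c_n\, dB^n(t),
$$
and I compare with the auxiliary scalar SDE
$$
dz^n=\bigl(-\lambda_1^n z^n+f(z^n)\bigr)\,dt+\sigma c_n\,dB^n,\qquad z^n(0)=\Phi^n(0).
$$
Exactly as in the continuous case, the difference $e=\Phi^n-z^n$ satisfies a deterministic linear differential inequality with $e(0)=0$, so $e\ge 0$ while defined, hence $\Phi^n(t)\ge z^n(t)$. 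Lemma \ref{feller.test} applies to $z^n$ with no change (the Feller test only uses that the drift is $-\lambda_1^n x+f(x)$, the diffusion coefficient $\sigma c_n$ is a nonzero constant, and $\int^{\infty}1/f<\infty$), giving $\mathbb{P}(S^n<\infty)=1$ with $\lim_{t\nearrow S^n}z^n(t)=+\infty$. Finally, from $h\sum_i \phi^n_i=1$ and the uniform bound on $\max_i\phi^n_i$ one has $\Phi^n(t)\le (\max_i \phi^n_i)\,\|U(t)\|_\infty\le C\|U(t)\|_\infty$, so the explosion of $\Phi^n$ forces the explosion of $\|U\|_\infty$ at some $T^n\le S^n$, giving $\mathbb{P}^{U^0}(T^n<\infty)=1$.

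The only place that requires genuine care, rather than a mechanical translation, is verifying that the discrete eigenvector $\phi^n$ has the structural properties we used in the continuous argument: positivity in the interior, the $L^1$-type normalization $h\sum_i\phi^n_i=1$, the identity $\langle AU,\phi^n\rangle=\lambda_1^n\langle U,\phi^n\rangle$, and the uniform (in $n$) upper bound on $\max_i\phi^n_i$ so that $\Phi^n$ really controls $\|U\|_\infty$. These are all standard facts about the explicit tridiagonal matrix $A$ (whose eigenvectors are exactly the sampled sines $\sin(k\pi i h)$), so the argument goes through and Theorem \ref{teo.explota.semidiscreto} follows.
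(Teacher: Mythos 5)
Your proposal is correct and follows essentially the same route as the paper: the authors likewise take the first (positive, $\ell^1$-normalized) eigenvector $\phi^n$ of $A$, form $\Phi^n(t)=h\sum_i\phi^n_iu_i(t)$, derive the inequality $d\Phi^n\ge(-\lambda_1^n\Phi^n+f(\Phi^n))\,dt+\sigma_n\,dB$ with $\sigma_n\to\sigma\pi/\sqrt{8}$, and conclude via Lemma \ref{feller.test}. Your write-up merely fills in details the paper leaves implicit (Jensen with the discrete weights, the quadratic-variation computation, and the comparison with the scalar SDE).
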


\begin{proof}
The proof uses the same technique of that of Theorem \ref{teo.explota.continuo}.
Since $A$ is a symmetric positive definite matrix, we have a sequence of
positive eigenvalues of $A$, $0 < \lambda^n_1 \le \dots \le \lambda_n^n$. Let
$\phi^n$ the eigenvector associated to $\lambda^n_1$. It is easy to see that
one can tale $\phi^n$ such that $\phi^n_j\ge 0$ for every $j$, and we assume
that it is normalized such that $\sum_{i=1}^n h \phi_i^n=1$. Now, consider the function
$$
\Phi^n(t) = \sum_{i=1}^n  h\phi^n_i u_i(t).
$$
Proceeding as in the proof of Theorem \ref{teo.explota.continuo} we get that
$\Phi^n$ verifies

$$
d\Phi^n(t)  \ge (-\lambda^n_1\Phi^n(t) + f(\Phi^n(t))) \, dt + \s_n dB(t),
$$
where $B$ is a standard Brownian motion and $\s_n \to \s\pi/\sqrt{8}$. The rest of the proof follows by Lemma
\ref{feller.test} as in Theorem \ref{teo.explota.continuo}.
\end{proof}

Now we turn to the problem of convergence of the approximations. In \cite{Gy}
convergence of this numerical scheme for globally Lipschitz reactions is proved

\begin{teo}[Gy\"ongy, \cite{Gy} Theorem 3.1]
\label{teo.gyongy}
Assume $f$ is globally Lipschitz and $u_0 \in C^3([0,1])$. Then
\begin{enumerate}
\item For every $p\ge 1$ and for every $T>0$ there exists a constant $K=K(p,T)$
such that
$$
\sup_{0 \le t \le T}\sup_{x \in [0,1]} \E (|u^n(t,x) - u(t,x)|^{2p}) \le
\frac{K}{n^p}.
$$

\item $u^n(t,x)$ converges to $u(t,x)$ uniformly in $[0,T]\times[0,1]$ almost
surely as $n\to \infty$.
\end{enumerate}
\end{teo}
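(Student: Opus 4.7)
My plan is to work with the mild (integral) formulations of both the SPDE and its semidiscrete approximation, estimate the error by combining a deterministic kernel comparison with moment bounds for stochastic convolutions, and close with a Gronwall inequality followed by a Kolmogorov-type upgrade to the uniform almost sure statement.

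First I would write $u$ via
\[
u(x,t) = \int_0^1 G_t(x,y) u_0(y)\, dy + \int_0^t\!\!\int_0^1 G_{t-s}(x,y) f(u(y,s))\,dy\,ds + \s \int_0^t\!\!\int_0^1 G_{t-s}(x,y)\,dW(y,s),
\]
with $G$ the Dirichlet heat kernel on $(0,1)$, and represent the piecewise-linearly-extended $u^n$ analogously with a discrete kernel $G^n$ built from the matrix exponential of $-A$ (interpolated in $x$ and $y$). The error $e^n := u^n - u$ then splits naturally into three pieces: an initial-condition term $(G^n_t - G_t) u_0$, a drift term involving $G^n f(u^n) - G f(u)$, and a noise term $\s \int\!\!\int (G^n_{t-s} - G_{t-s})\,dW$.

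The two consistency pieces $(G^n - G) u_0$ and $(G^n - G) f(u)$ are handled by expanding both kernels in the sine basis $\{\sin(k\pi x)\}$: the eigenvalues and eigenvectors of $-A$ approximate those of $-\partial_{xx}$ at rate $1/n$ on low frequencies, while high frequencies are damped by $u_0 \in C^3$ and by the uniform moment bounds on $u$ available under globally Lipschitz $f$. For the stochastic term, Burkholder--Davis--Gundy at exponent $2p$ reduces the matter to the deterministic $L^2$ kernel estimate
\[
\sup_{x\in[0,1]}\;\int_0^t\!\!\int_0^1 \bigl(G^n_{t-s}(x,y) - G_{t-s}(x,y)\bigr)^2\,dy\,ds \;\le\; \frac{K}{n},
\]
which I would extract from the same spectral expansion. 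Using the Lipschitz constant of $f$ to absorb $|f(u^n)-f(u)|$ into $|e^n|$ then yields a linear Gronwall inequality of the form $\sup_x \E|e^n(x,t)|^{2p} \le K_0 n^{-p} + C \int_0^t \sup_x \E|e^n(x,s)|^{2p}\, ds$, which delivers (1).

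For (2) I would run the same template on spatial and temporal increments $e^n(x,t)-e^n(x',t')$ to produce H\"older-type moment estimates whose constants are of order $n^{-p}$ times a suitable power of $|x-x'|+|t-t'|^{1/2}$, and then invoke Kolmogorov's continuity criterion to upgrade this to $\E \sup_{[0,T]\times[0,1]} |e^n|^{2p} \le K' n^{-p+\alpha}$ for arbitrarily small $\alpha>0$; almost sure uniform convergence then follows from Borel--Cantelli. The main obstacle will be the sharp kernel bound above: making $(G^n-G)^2$ integrate to order $1/n$ uniformly in $x$, despite the singularity of $G_{t-s}(x,y)$ as $s\uparrow t$, is the technical heart of the whole argument and is where the choice of the proper interpolation of $G^n$ and a careful split between low and high spectral modes really matters.
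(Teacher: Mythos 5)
This theorem is not proved in the paper at all: it is imported verbatim from Gy\"ongy \cite{Gy} (Theorem 3.1 there), so there is no internal proof to compare against. Your sketch is a faithful reconstruction of the strategy actually used in that reference --- mild formulations with the continuous and discrete heat kernels driven by the same white noise, spectral comparison of the kernels to get the $L^2$ bound of order $1/n$, Burkholder--Davis--Gundy plus Gronwall for part (1), and Kolmogorov/Borel--Cantelli for part (2) --- and the claimed kernel rate is consistent with the stated $K/n^p$ bound on the $2p$-th moments. The only caveat is that, as you yourself note, the uniform-in-$x$ kernel estimate near the diagonal singularity is the genuinely technical step, and your sketch asserts it rather than proves it; for the purposes of this paper that is immaterial, since the result is simply cited.
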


Based on this theorem we can prove that even when $f$ is just locally
Lipschitz, convergence holds but just in (stochastic) time intervals where the
solution remains bounded. Observe that a better convergence result is not
expected. Since the explosion times of $u$ and $u^n$ in general are different,
then $\|u^n(t,\cdot) - u(t, \cdot)\|_\infty$ is unbounded in intervals of the
form $[0, \tau]$ with $\tau$ close to the minimum of the explosion times. To
state the convergence result we define the following stopping times. Let $M>0$
and consider $R_M:=\inf\{t>0, \|u(t,\cdot)\|_{L^\infty([0,1])} \ge M \}$ and
$R^n_M:=\inf\{t>0, \|u^n(t,\cdot)\|_{L^\infty([0,1])} \ge M \}$

\begin{teo}
Assume $f$ is a nonnegative convex function with $\int \tfrac 1 f < \infty$.
Let $u$ be the solution to \eqref{SPDE} and $u^n$ its numerical approximation
given by \eqref{semidiscrete}. Then
\begin{enumerate}
\item For every $p\ge 1$ and for every $T>0$ there exists a constant $K=K(p,T)$
such that
$$
\sup_{0 \le t \le T}\sup_{x \in [0,1]} \E (|u^n(t,x) - u(t,x)|^{2p}\one_{\{t\le R_M\wedge R^n_M\}}) \le \frac{K}{n^p}.
$$

\item For every $M\ge 0$ $\|u^n - u\|_{L^\infty([0,T \wedge R_M]\times[0,1])}$
converges to zero almost surely as $n\to \infty$.
\end{enumerate}
\end{teo}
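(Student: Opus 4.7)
The strategy is to reduce to the globally Lipschitz setting of Theorem \ref{teo.gyongy} (Gy\"ongy) by truncation, and then use pathwise uniqueness to transfer the estimate back to $u$ and $u^n$. Fix $M>0$ and pick a globally Lipschitz function $f_M$ which agrees with $f$ on $[-M,M]$ (for instance, extend $f|_{[-M,M]}$ linearly, or as a constant, outside the interval). Let $\tilde u$ denote the unique global-in-time solution of \eqref{SPDE} with $f$ replaced by $f_M$, and let $\tilde u^n$ denote the corresponding numerical approximation \eqref{semidiscrete} with $f$ replaced by $f_M$. Since $f_M$ is globally Lipschitz, Theorem \ref{teo.gyongy} applies to the pair $(\tilde u, \tilde u^n)$ and yields
$$
\sup_{0\le t\le T}\sup_{x\in [0,1]} \E\bigl(|\tilde u^n(t,x)-\tilde u(t,x)|^{2p}\bigr)\le \frac{K}{n^p}.
$$

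For part (1), by pathwise uniqueness of solutions to \eqref{SPDE} and to \eqref{semidiscrete} (up to the corresponding explosion times), the trajectories $u$ and $\tilde u$ coincide almost surely on $[0,R_M]$, because both solve the same equation (with the same driving noise and initial datum) and take values in $[-M,M]$ there; likewise $u^n=\tilde u^n$ on $[0,R^n_M]$. Hence on the event $\{t\le R_M\wedge R^n_M\}$ we have $u^n(t,x)-u(t,x)=\tilde u^n(t,x)-\tilde u(t,x)$, so the indicator trivially bounds
$$
\E\bigl(|u^n(t,x)-u(t,x)|^{2p}\one_{\{t\le R_M\wedge R^n_M\}}\bigr)\le \E\bigl(|\tilde u^n(t,x)-\tilde u(t,x)|^{2p}\bigr)\le \frac{K}{n^p},
$$
proving (1).

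For part (2) I would choose a slightly larger truncation level $M'>M$ (say $M'=M+1$) and write $\tilde u^{M'}, \tilde u^{n,M'}$ for the associated truncated solutions. Theorem \ref{teo.gyongy}(2) ensures $\tilde u^{n,M'}\to \tilde u^{M'}$ uniformly on $[0,T]\times [0,1]$ almost surely. Pathwise uniqueness again gives $u=\tilde u^{M'}$ on $[0,R_{M'}]\supset [0,R_M]$, and on $[0,T\wedge R_M]$ this function is bounded by $M<M'$. By the uniform convergence, for almost every $\omega$ there is $n_0(\omega)$ so that $\|\tilde u^{n,M'}(\cdot,\cdot,\omega)\|_{L^\infty([0,T\wedge R_M]\times [0,1])}<M'$ for $n\ge n_0$ (noting that the $L^\infty$ norm of the piecewise linear interpolation equals the maximum of its nodal values). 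This forces $R^n_{M'}(\omega)>T\wedge R_M(\omega)$, and hence $u^n=\tilde u^{n,M'}$ on that interval, for $n\ge n_0(\omega)$. The a.s.\ uniform convergence $\tilde u^{n,M'}\to \tilde u^{M'}$ then transfers directly to the required a.s.\ convergence $\|u^n-u\|_{L^\infty([0,T\wedge R_M]\times [0,1])}\to 0$.

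The main delicacy lies in rigorously invoking pathwise uniqueness to identify $u$ with $\tilde u^{M'}$ (and $u^n$ with $\tilde u^{n,M'}$) on the random intervals $[0,R_{M'}]$ and $[0,R^n_{M'}]$; this is what allows the transfer between the truncated (globally Lipschitz) setting, where Gy\"ongy's estimates live, and the locally Lipschitz problem we actually care about. Everything else amounts to the bookkeeping that on the stopped interval the truncation is invisible.
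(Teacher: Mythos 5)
Your proposal is correct and follows essentially the same route as the paper: truncate $f$ at level roughly $M$ to land in Gy\"ongy's globally Lipschitz setting, then use pathwise uniqueness up to the first-exit times to identify the truncated solutions with $u$ and $u^n$ on the stopped interval. The only (immaterial) difference is in part (2), where you create the needed room by raising the truncation level to $M+1$, whereas the paper keeps the truncation at $M$ and instead proves the convergence on $[0,T\wedge R_{M-1}]$, noting $M$ is arbitrary.
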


\begin{rem}
Observe that statement (2) does not make assumptions on the numerical
approximations $u^n$.
\end{rem}

\begin{proof}
First, we truncate the $f$ to get a globally Lipschitz function, bounded and
that coincides with the original $f$ for values of $s$ with $|s|\le M$. i.e. we
consider
\begin{align*}
f_M(s) &= \left\{ \begin{array}{ll}
                    f(s) & \mbox{if } |s|\le M\\
                    f(M) & \mbox{if } s\ge M\\
                    f(-M)& \mbox{if } s\le -M,
                    \end{array}\right.\\
\end{align*}
Let $w$ and $w^n$ be the solutions of \eqref{SPDE} and \eqref{semidiscrete}
with $f$ replaced by $f_M$ respectively.

{}From Theorem \ref{teo.gyongy},
$$
\sup_{0 \le t \le T}\sup_{x \in [0,1]} \E (|w^n(t,x) - w(t,x)|^{2p}) \le
\frac{K}{n^p},
$$
From the uniqueness of solutions of\eqref{SPDE} and \eqref{semidiscrete} up to
the stopping time $R_M \wedge R_M^n$, we have that almos surely, if $t\le R_M \wedge R_M^n$
then $u(t,x)=w(t,x)$ and $u^n(t,x)=w^n(t,x)$, hence
\begin{align*}
\sup_{0 \le t \le T}\sup_{x \in [0,1]} & \E (|u^n(t,x) - u(t,x)|^{2p}\one_{\{t\le R_M\wedge R^n_M\}})  = \\
& \sup_{0 \le t \le T}\sup_{x \in [0,1]} \E (|w^n(t,x) - w(t,x)|^{2p}\one_{\{t\le R_M\wedge R^n_M\}}) \le\\
& \sup_{0 \le t \le T}\sup_{x \in [0,1]} \E (|w^n(t,x) - w(t,x)|^{2p})  \le \frac{K}{n^p}.
\end{align*}

This proves (1). To prove (2) observe that since $w^n \to w$ almost surely and
uniformly in $[0,T]\times[0,1]$ we have that for every $\ve>0$ and $0\le t\le
R_M$, $\|w^n(t, \cdot)\|_\infty \le M+\ve$ if $n$ is large enough. That means
that $\liminf R_M^n\ge R_M$ and hence $R_M\wedge R_M^n \to R_M$. That is the
reason we can get rid of $R_M^n$. So we have
\begin{align*}
0=& \lim_{n\to \infty}\|w^n - w\|_{L^\infty([0,T]\times[0,1])}\\
\ge &\lim_{n\to \infty}\|(w^n - w)\one_{\{t\le R_{M-1}\wedge R^n_{M}\}}\|_{L^\infty([0,T]\times[0,1])}\\
= & \lim_{n\to \infty}\|(u^n - u)\one_{\{t\le R_{M-1}\wedge R^n_{M}\}}\|_{L^\infty([0,T]\times[0,1])}\\
\ge & \lim_{n\to \infty}\|(u^n - u)\one_{\{t\le R_{M-1}\}}\|_{L^\infty([0,T]\times[0,1])}\\
= & \lim_{n\to \infty}\|u^n - u\|_{L^\infty([0,T\wedge R_{M-1}]\times[0,1])}.
\end{align*}
Since $M$ is an arbitrary constant, this proves (2).
\end{proof}

\begin{rem}
In order to compute an approximate solution this discretization in not enough,
now we need to discretize the time variable but this is much simpler since now
we are dealing with a SDE instead of a SPDE. The time discretization of
\eqref{semidiscrete} can be handled as in \cite{DFBGRS}.
\end{rem}

\section{Numerical experiments}
\label{experiments}
\setcounter{equation}{0}

In this section we show some numerical simulations of \eqref{SPDE}. We perform
all the simulations with the reaction $f(u)=(u_+)^2$, $\s=6.36$ and initial
datum $u_0\equiv0$.

\begin{figure}[ht]
$$
\begin{array}{ccc}
\includegraphics[width=4cm]{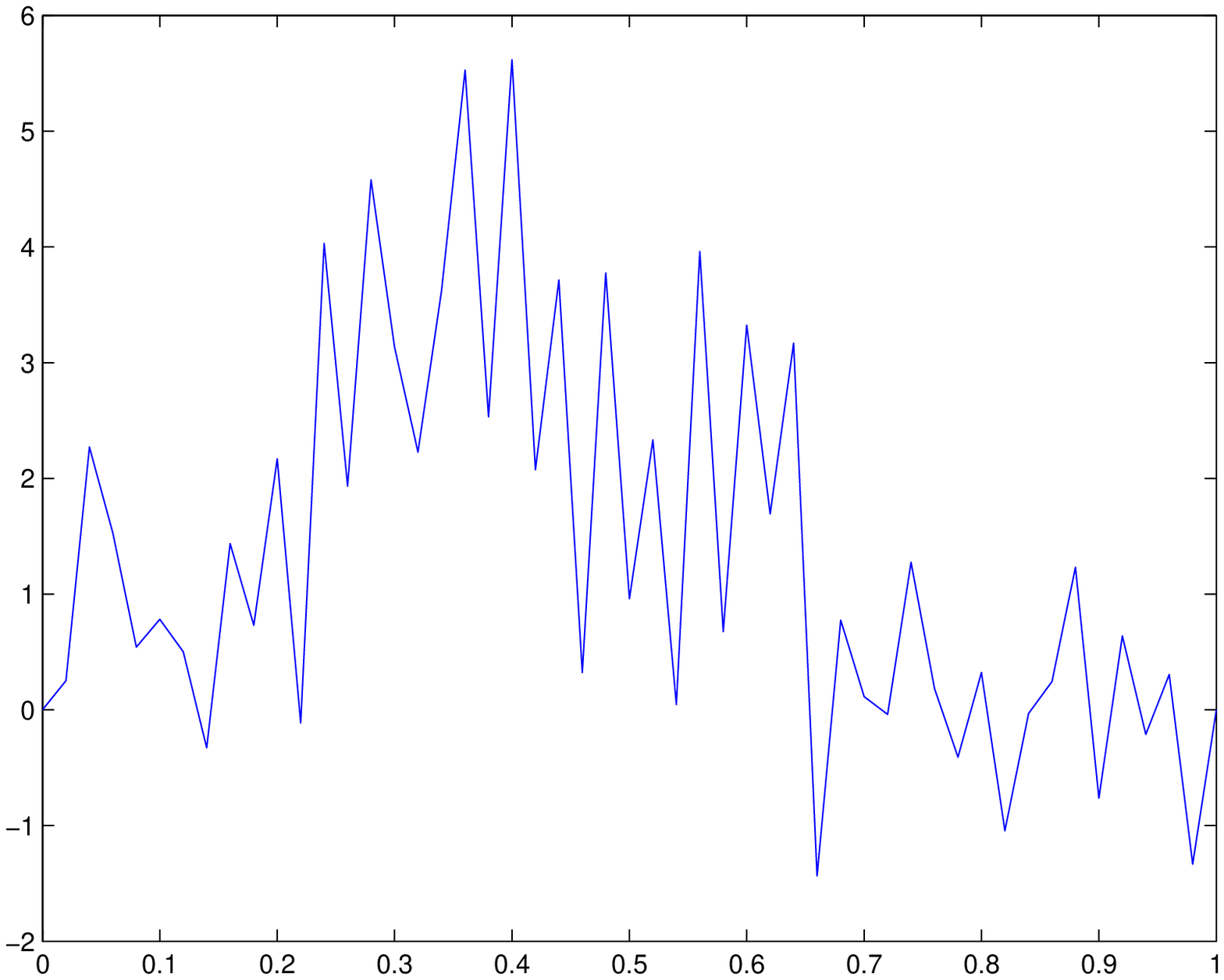} & \hspace{-6pt} \includegraphics[width=4cm]{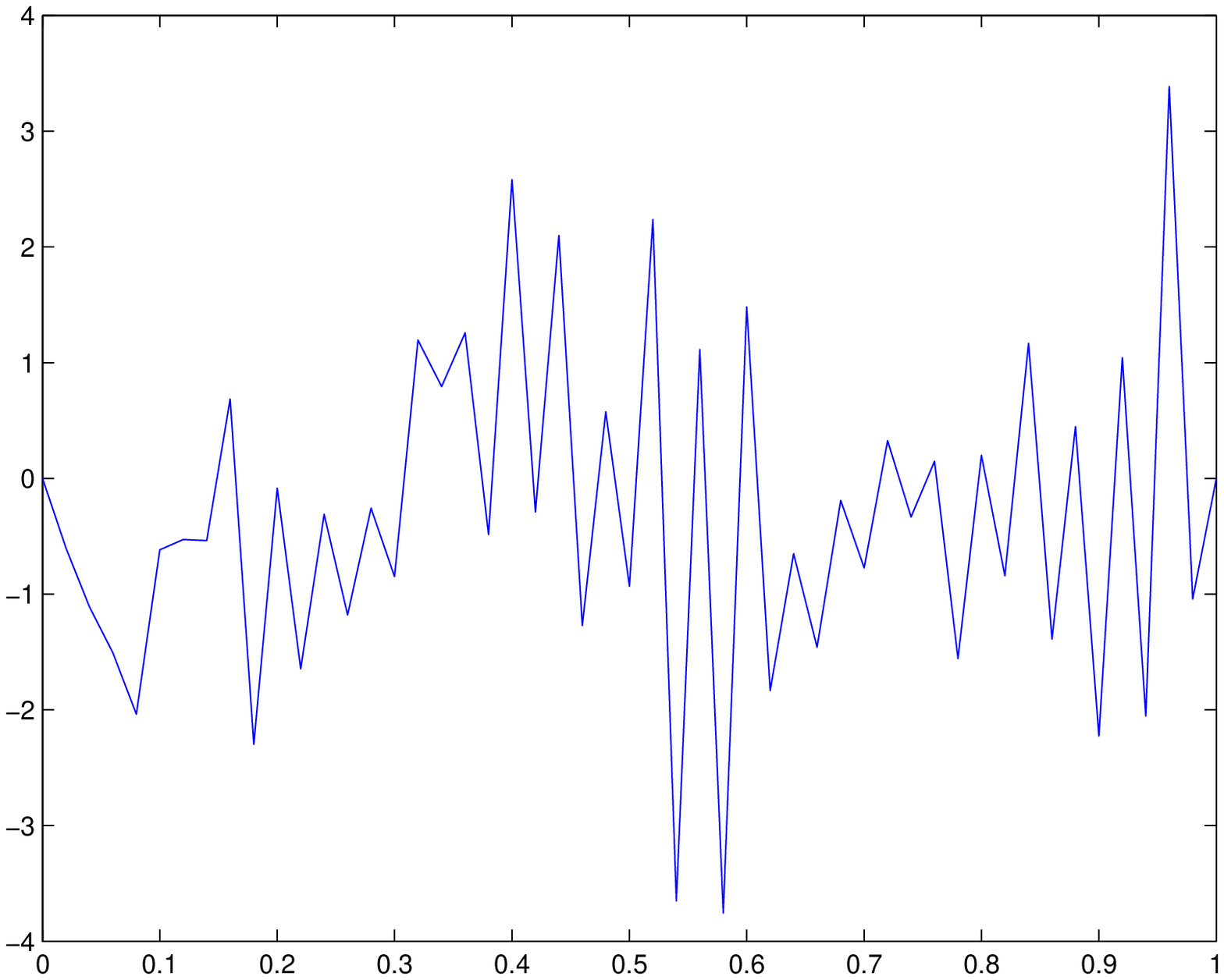} &
\hspace{-6pt}\includegraphics[width=4cm]{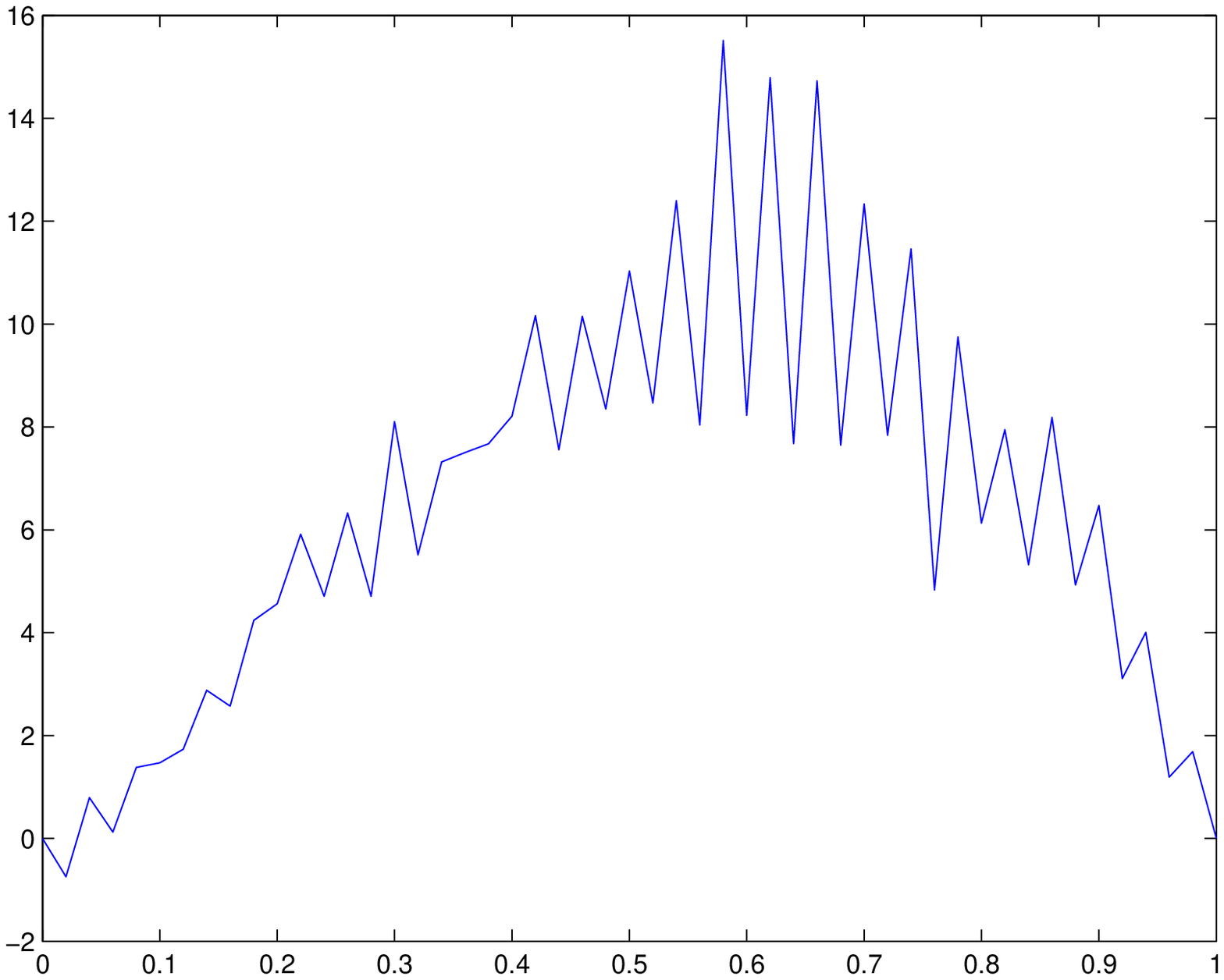}\\[-6pt]
\scriptstyle t=1 & \scriptstyle t=50 & \scriptstyle t=72.0202\\
\includegraphics[width=4cm]{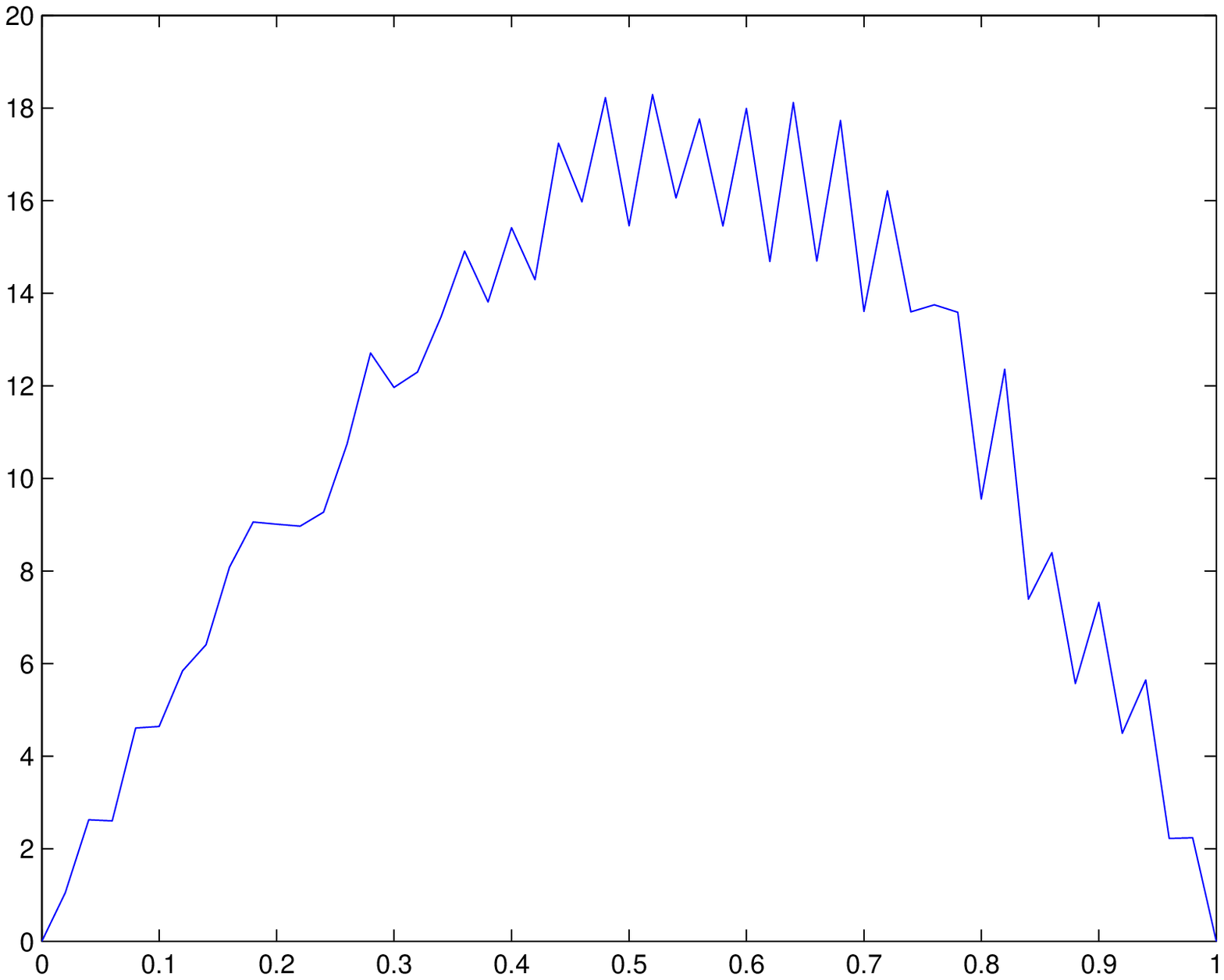} & \hspace{-6pt} \includegraphics[width=4cm]{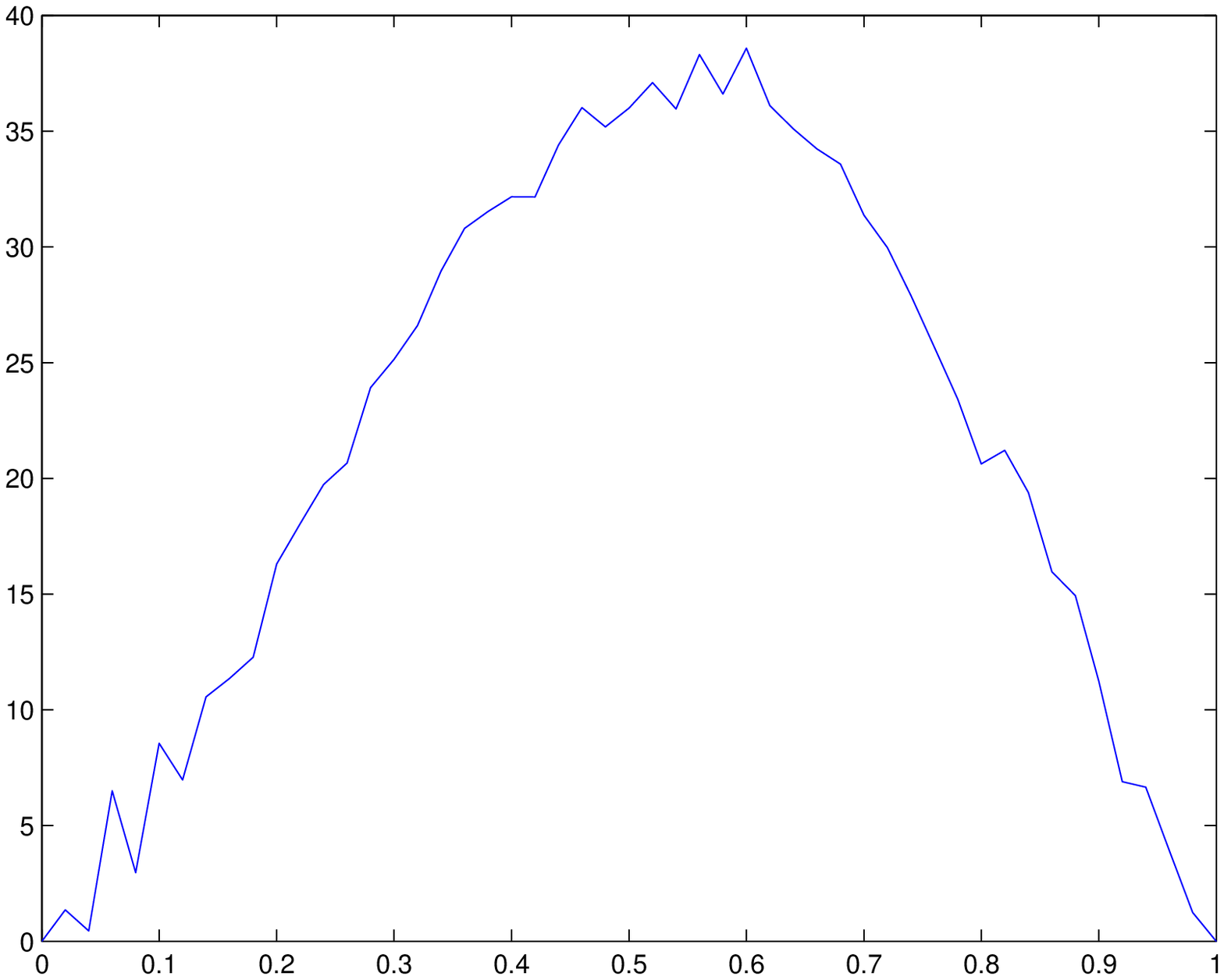} &
\hspace{-6pt}\includegraphics[width=4cm]{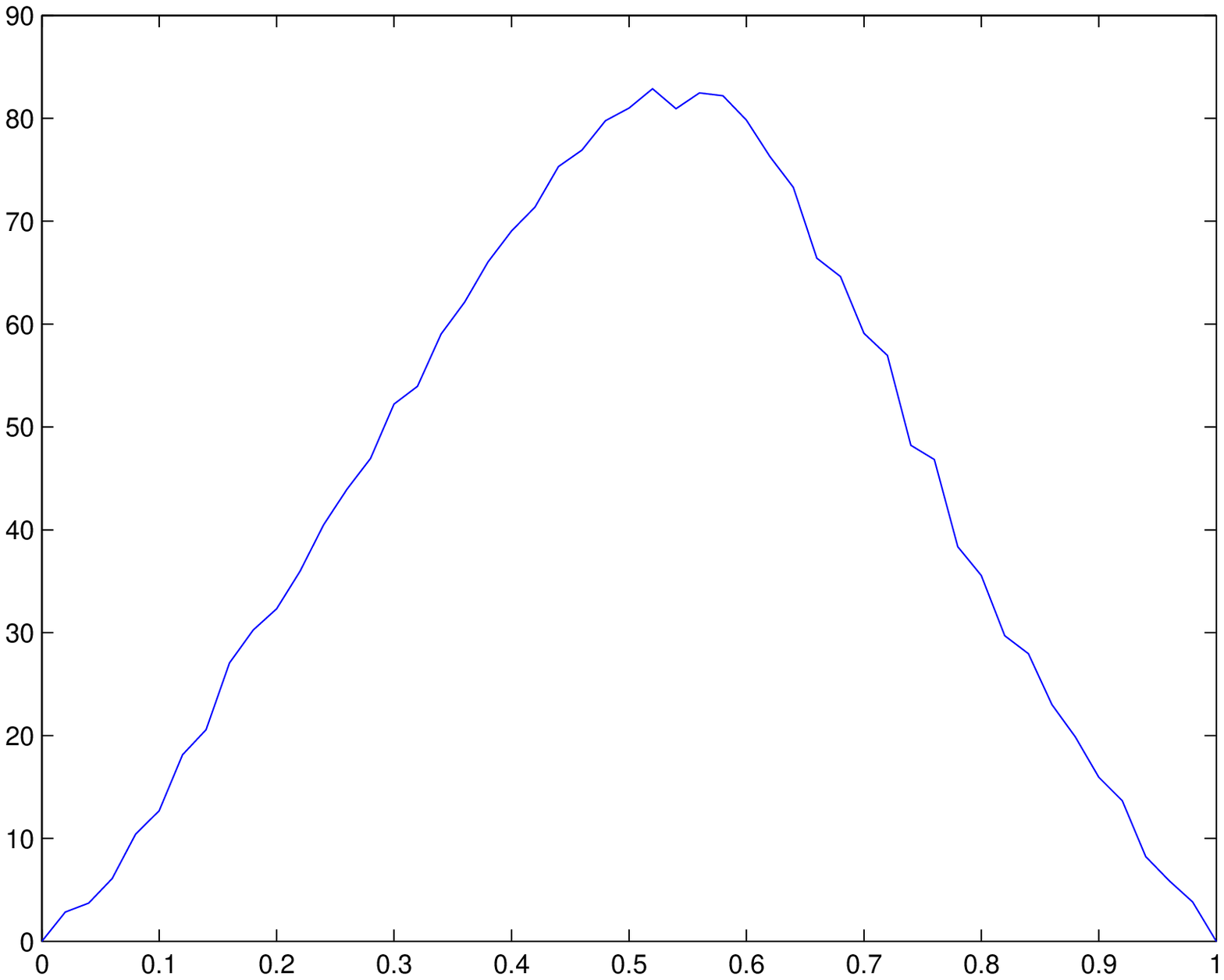}\\[-6pt]
\scriptstyle t=72.4202 & \scriptstyle t=72.4802& \scriptstyle t=72.5002\\
\includegraphics[width=4cm]{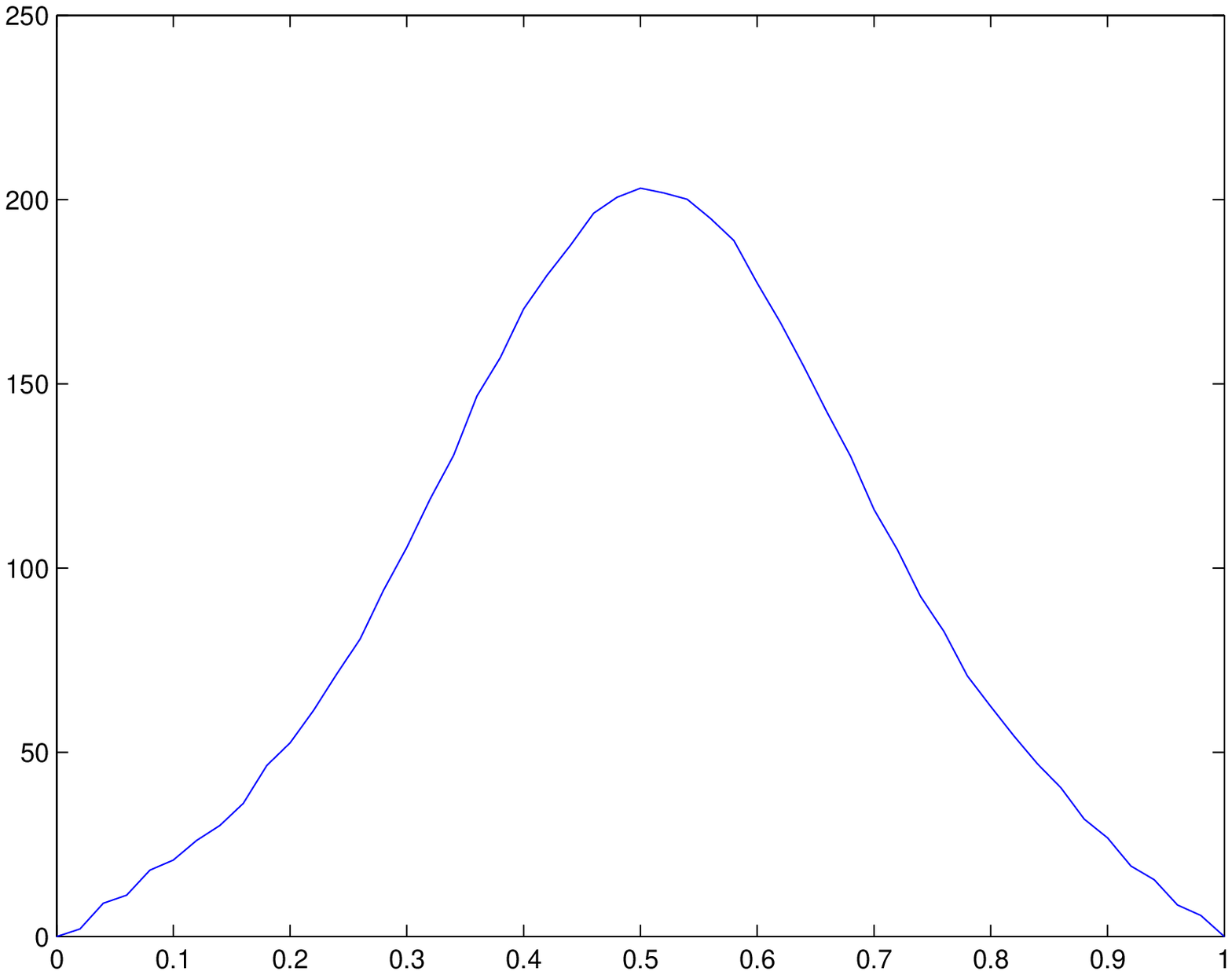} & \hspace{-6pt} \includegraphics[width=4cm]{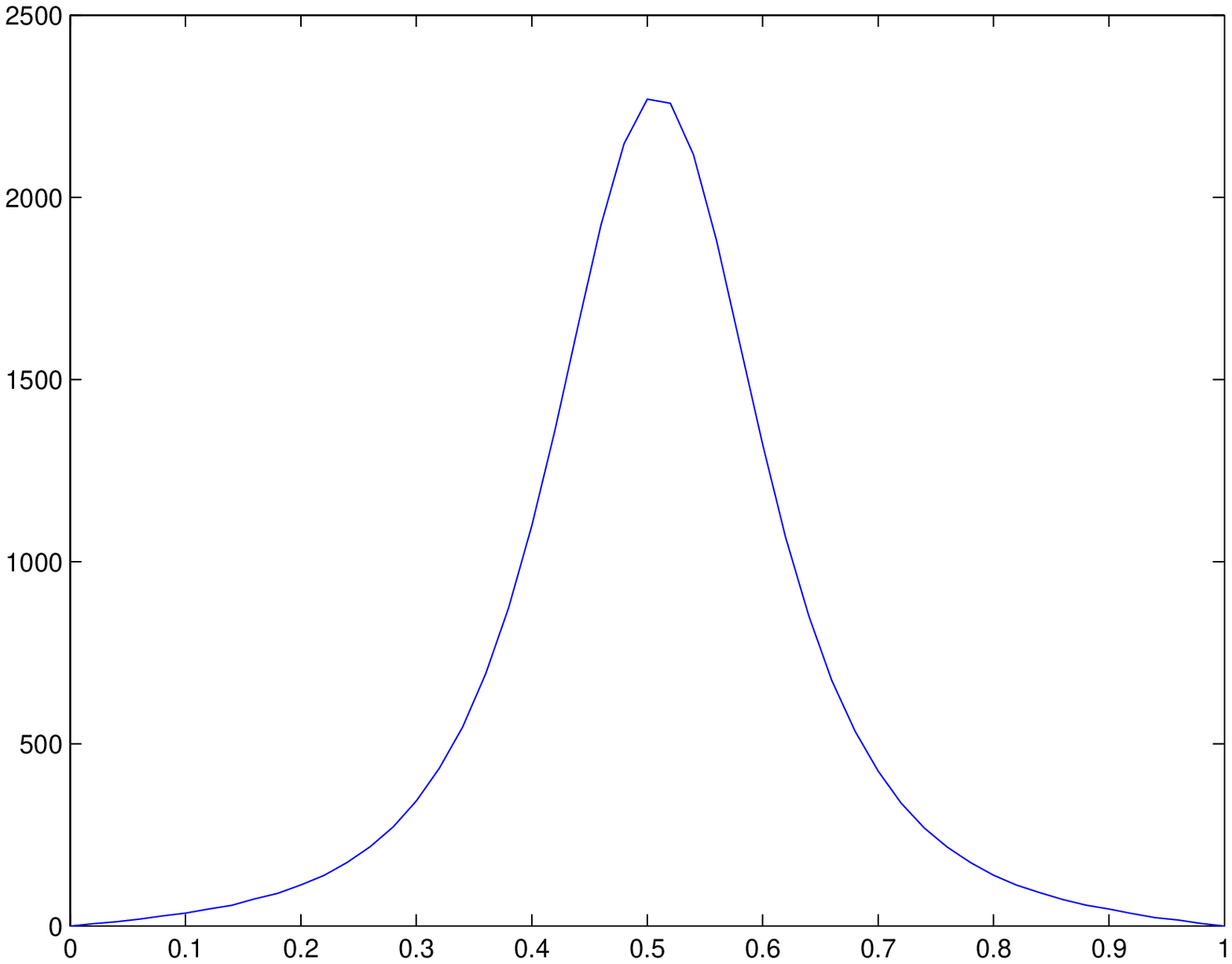} &
\hspace{-6pt}\includegraphics[width=4cm]{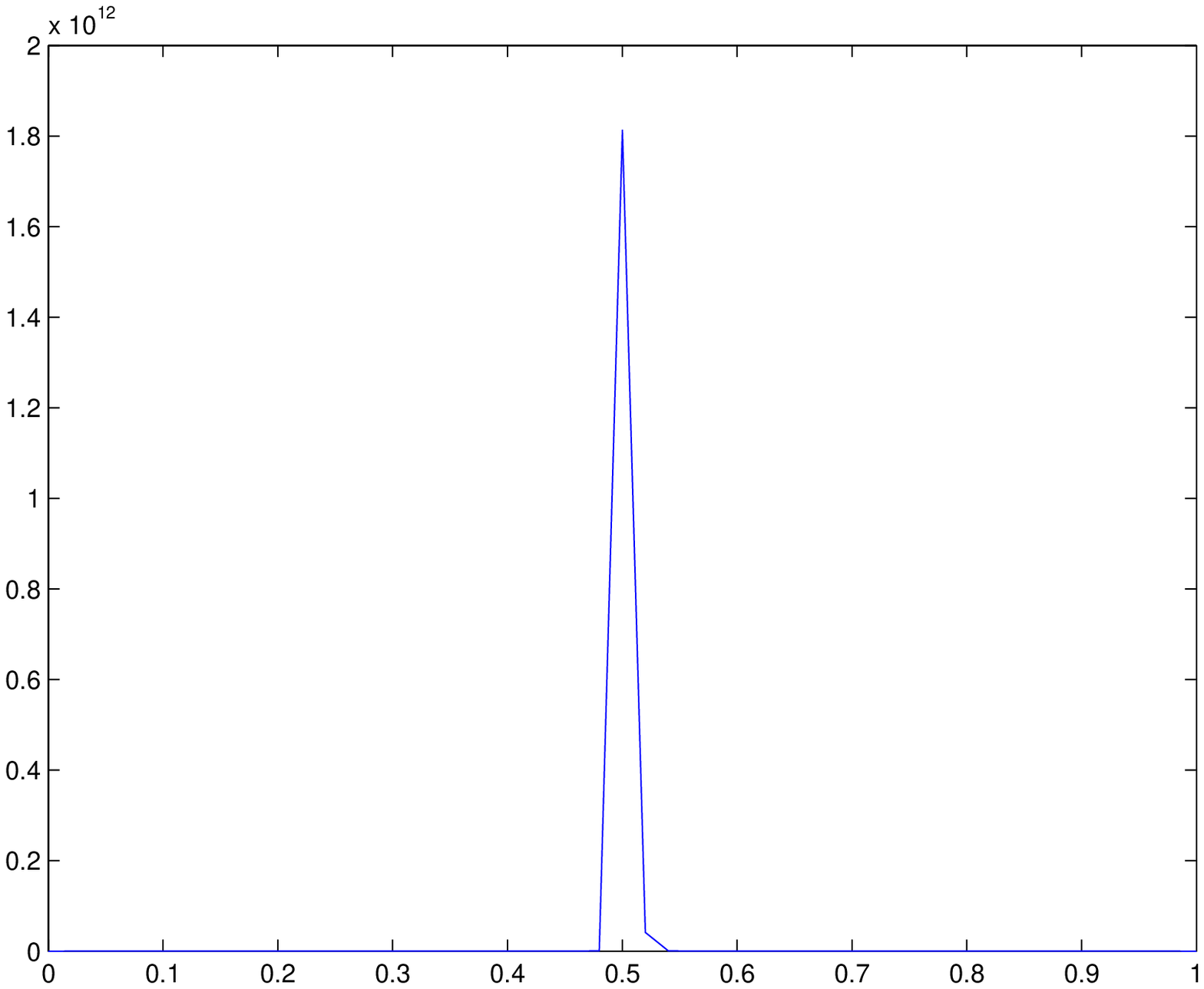} \\[-6pt]
\scriptstyle t=72.5012 & \scriptstyle t=72.5068 & \scriptstyle
t=72.5076
\end{array}
$$
\caption{Profiles of a sample solution at different times.} \label{perfiles}
\end{figure}
\begin{table}[ht]
\begin{tabular}{|c|r|r|}
\hline Snapshot & Time & $\|u(\cdot,t,\omega)\|_\infty$\\ \hline
 1 & 1.0000 & 5.6159\\\hline
 2 & 50.0000 &  3.3863\\\hline
 3 & 72.0202 & 15.5104\\\hline
 4 & 72.4202 & 18.2885\\\hline
 5 & 72.4802 & 38.5848\\\hline
 6 & 72.5002 & 82.8705\\\hline
 7 & 72.5012 & 203.0799\\\hline
 8 & 72.5068 & $2.2695 \times 10^3$\\\hline
 9 & 72.5076 & $1.8128 \times 10^{12}$\\\hline
\end{tabular}
\vspace{15pt} \caption{The maximum of the solution at differen
times} \label{tabla1}
\end{table}
\begin{figure}[ht]
\includegraphics[width=10cm]{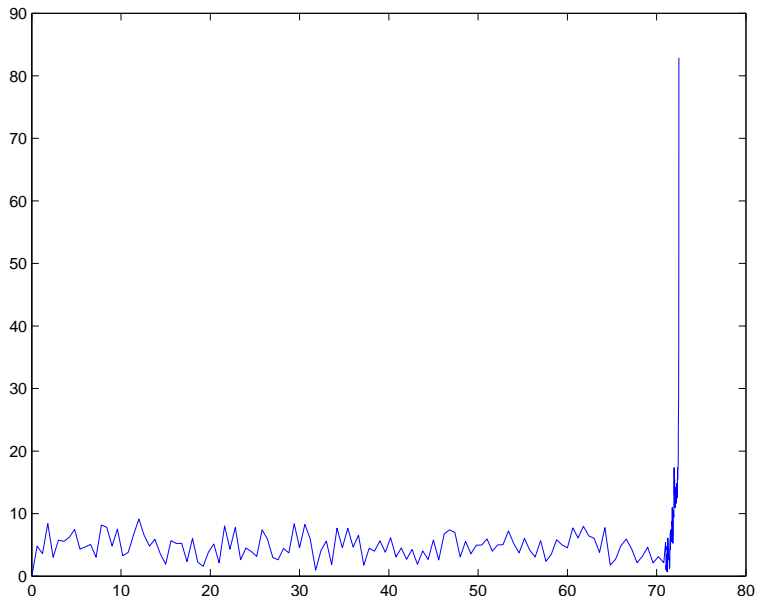}
\caption{The evolution of the maximum of a sample solution with
initial data $u_0\equiv 0$} \label{evo.maximo}
\end{figure}
\begin{figure}[ht]
\includegraphics[width=12cm]{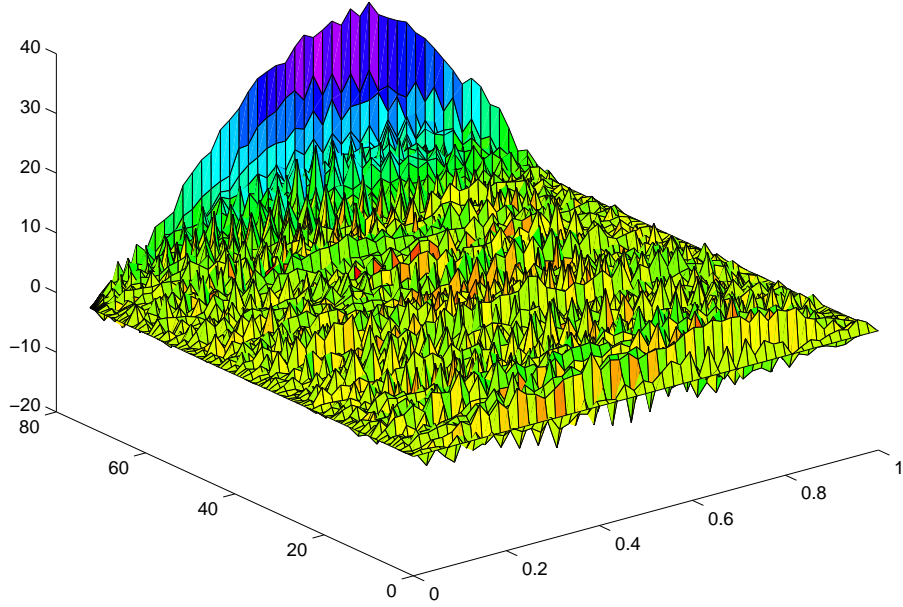}
\caption{The graph of a sample solution with initial datum
$u_0\equiv 0$}\label{3d}
\end{figure}
\begin{figure}[ht]
$$
\begin{array}{cc}
\includegraphics[width=6cm]{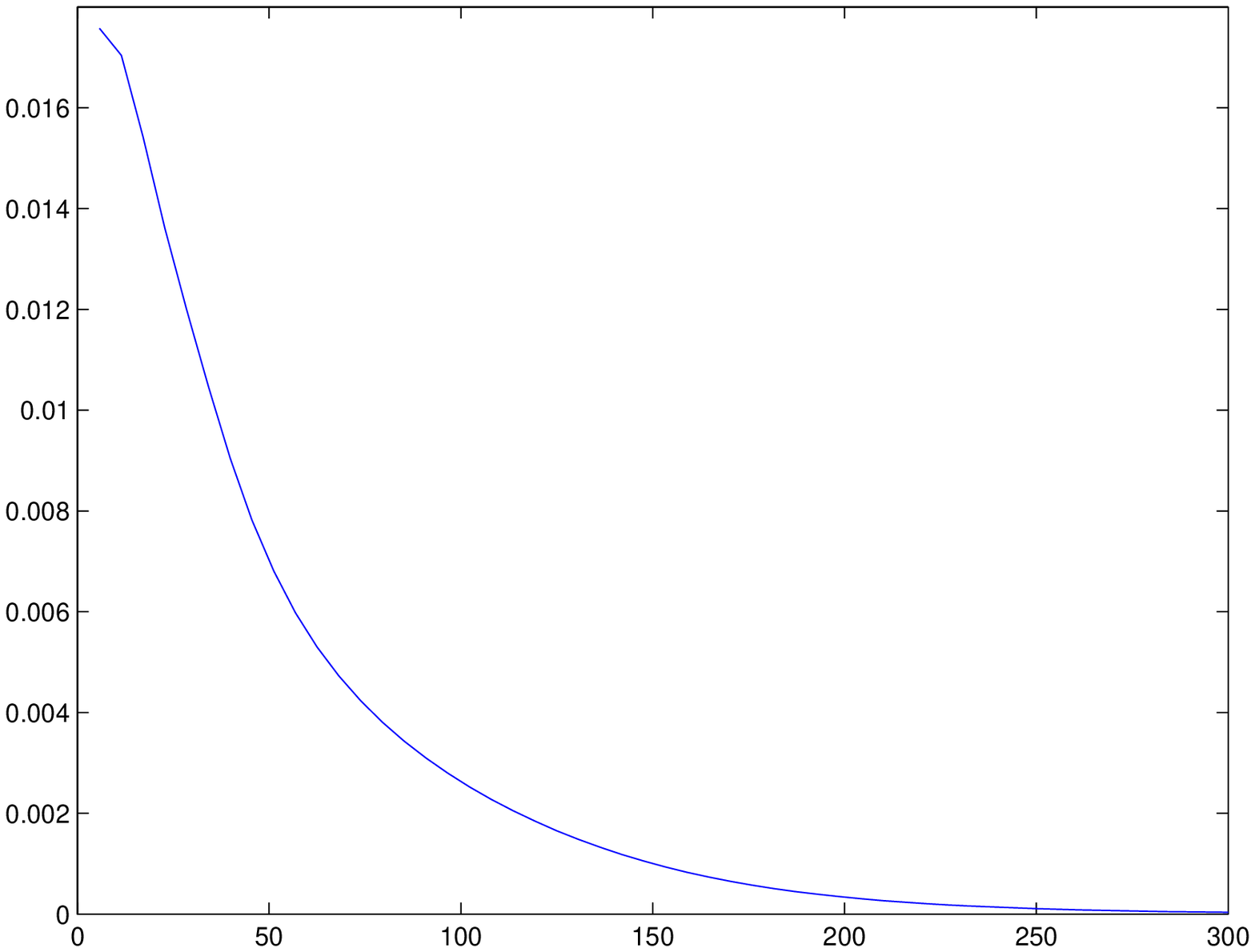}  & \hspace{-2pt}\includegraphics[width=6cm]{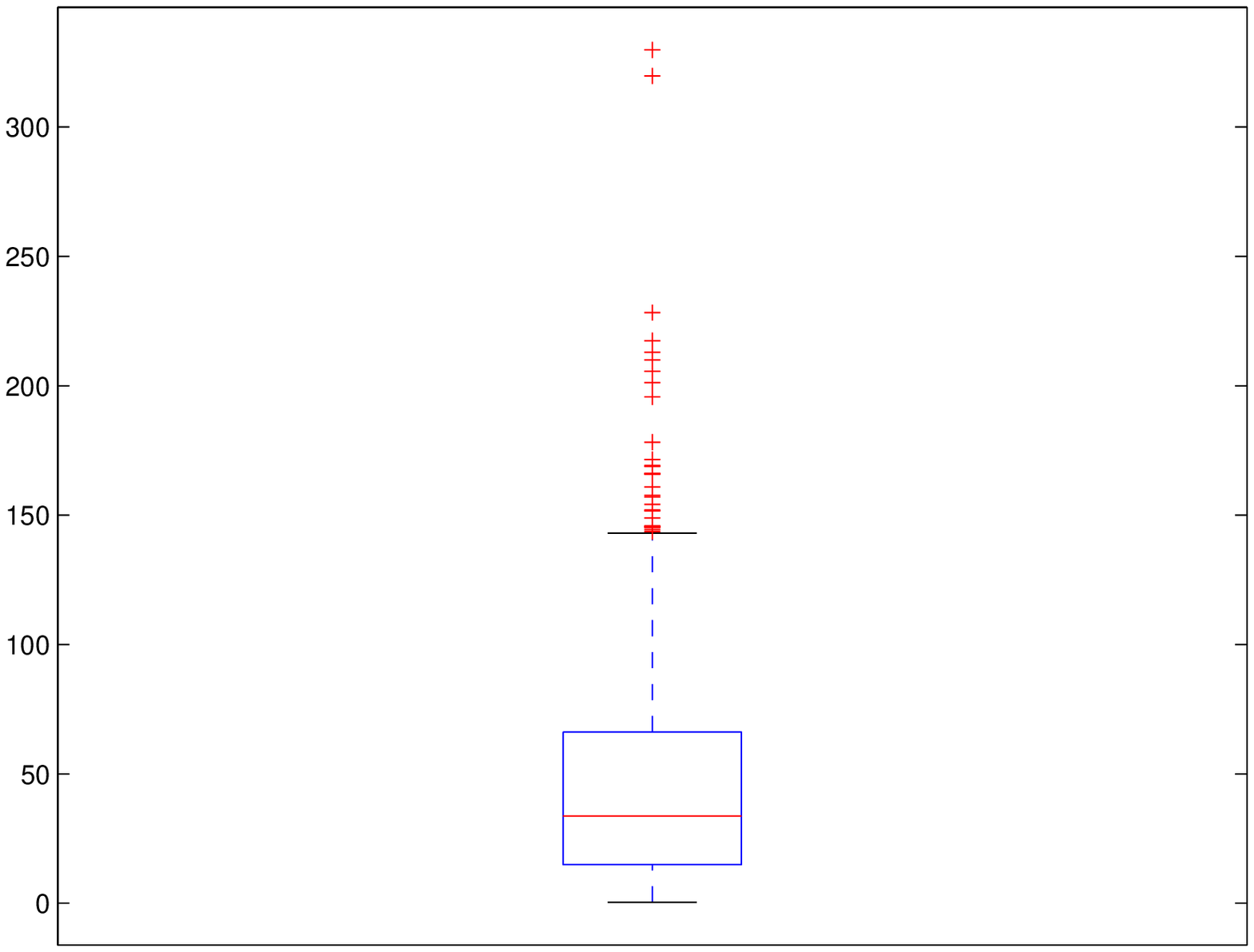}
\end{array}
$$
\caption{The kernel density estimator of the explosion time for
$\s=6.36$ and the corresponding box--plot.}\label{distrib}
\end{figure}

To perform the simulations we use the numerical scheme introduced in Section \ref{approximations}, that is we discretize the space variable with second order
finite differences in a uniform mesh of size $h=0.02$ (i.e.: $n=50$ nodes).
With this discretization we obtain a system of SDE that reads
\begin{equation*}
du_i = \frac{1}{h^2} (u_{i+1} -2u_i + u_{i-1}) dt + f(u_i) \, dt +
\frac{\s}{\sqrt{h}} \, dw_i, \quad 2 \le i \le n-1,
\end{equation*}
accompanied with the boundary conditions $u_1=u_n=0$, $u_i(0)=u_0(ih)$, $1 \le
i \le n$. The Brownian motions $w_i$ are obtained by space integration of the
Brownian sheet in the interval $[(i-1/2)h, (i+1/2)h)$.

To integrate this system we use an adaptive procedure similar to the one
developed in \cite{DFBGRS} for the one dimensional case. Here we adapt the time
step as in that work replacing the value of the solution (which is a real
number) by the $L^1-$norm of $u^j$, as is done in \cite{G} for the
deterministic case. More precisely, the totally discrete scheme reads as
follows
\begin{equation*}
u^{j+1}_i - u_i^j = \frac{\tau_j}{h^2} (u^j_{i+1} -2u^j_i + u^j_{i-1}) + \tau_jf(u^j_i) +
\frac{\s}{\sqrt{h}} (w_i(t^{j+1}) - w_i(t^j)),
\end{equation*}
accompanied with the boundary conditions $u_1^j=u_n^j=0$, for every $j\ge 1$
and $u_i^0=u_0(ih)$, $1 \le i \le n$. Here
$$
t_0 = 0, \quad \tau_j = \frac{\tau}{\sum_{i} h u_i^j}, \quad  t_{j+1} - t_j = \tau_j,
$$
and $\tau$ is the time-discretization parameter. The Brownian motions $w_i$ are
the ones of the semidiscrete scheme.

We want to remark that adaptivity in time is essential in this case since a
fixed time step procedure gives rise to globally defined approximations.

Concerning adaptivity in space, it is knwon for the case $\s=0$ that it is not
needed to capture the behavior of the maximal existence time. However spatial
adaptivity is needed to compute accurately the behavior of the solution near
the forming singularities (see \cite{BHR,FGR, FGR2, G}).

In spite that in Theorem \ref{teo.explota.continuo} we prove that solutions to
\eqref{SPDE} blow up with probability one for every $\s>0$ and every initial
data, we want to remark that it is not possible to observe that in numerical
simulations since for small $\s$, the explosion time is exponentially large
when the initial datum is small.

Essentially, in order to blow-up, the solution needs to be greater than the
positive stationary solution of the deterministic problem (i.e. the solution of
$v_{xx} = -f(v)$, which is of size 12 when $f(v)=(v_+)^2$) plus the order of
the noise $\sigma$. Once the solution is in that range of values, the noise
cannot prevent the explosion.

The probability $p_\sigma$ that such an event occurs in a finite fixed time
interval depends on $\s$ and is exponentially small ($p_\sigma \sim
\exp(-1/\s^2)$). Hence, one can estimate $P(T_\sigma>{\rm e}^{1/2\s^2}) \sim
\exp(\exp(-1/2\s^2))$. That means that for $\sigma$ small, explosions can not
be appreciated numerically and hence the importance of the theoretical
arguments.

So, to show the explosive behavior we choose to do the simulations with
$\s=6.36$ and initial datum $u_0\equiv 0$. We ran the code with $\s\le5$ until
time $t=1000$ and we did not observe explosions but a meta-stable behavior.

The features of a particular sample path are shown in Figure \ref{perfiles}.

Table \ref{tabla1} shows the times at where the solution is drawn and the
$L^\infty-$ norm of the solution at that time.

In Figure \ref{evo.maximo} we show the evolution of the $L^\infty$ norm and in
Figure \ref{3d} is the whole picture as a function of $x$ and $t$ of a sample
path.

Finally, Figure \ref{distrib} shows some statistics: we perform 832 simulations
of the solution with $\s=6.36$ to obtain a sample of the explosion time.
Actually, we stop the simulation when the maximum of the solution reaches the
value $10^{13}$. The kernel density estimator of the data obtained by the
simulation and the corresponding box--plot are shown. The sample mean is
46.8834 and the sample standard deviation 43.8857.

These statistics suggest that the distribution of the explosion time $T_\sigma$
is close to an exponential variable. This is confirmed by the metastable
nature of the phenomena. The expected behavior of $T_\sigma$ in this case is
$$
\lim_{\s  \to 0} \frac{T_\s}{\E(T_\s)} = Z,
$$
where $Z$ is a mean one exponential variable.

\end{document}